\newtheorem{thm}{Theorem}[section]
\newtheorem{prop}[thm]{Proposition}
\newcommand{\ZZ}{\mathbb{Z}}
\newcommand{\QQ}{\mathbb{Q}}
\newcommand{\CC}{\mathbb{C}}
\newcommand{\PP}{\mathbb{P}}
\newcommand{\tF}{\tilde{\mathcal{F}}}
\newcommand{\tUx}{\tilde{U}_x}
\newcommand{\Omx}{\Omega_{\tilde{X}}^1}
\newcommand{\hone}{h^1(\tilde{X}, S^m\Omx)}
\newcommand{\hooklongrightarrow}{\lhook\joinrel\longrightarrow}
\DeclarePairedDelimiter\floor{\lfloor}{\rfloor}
\newenvironment{remark}
{ \begin{flushleft} \textbf{Remark:}}
{ \end{flushleft} }
\newtheorem{theorem}{Theorem}
\newcounter{a}
\else\stepcounter{a}\fi
\begin{document}
%\label{pp}
\thispagestyle{plain}

\begin{center}
\Large
\textsc{Resolutions of surfaces with big cotangent bundle and $A_2$ singularities}
\end{center}

%%%%%%%%%%%%%%%%%%%%
% Primeiros autores, com endereo comum

\begin{center}
\textit{Bruno De Oliveira}, \textit{Michael Weiss} \smallskip \\
\begin{tabular}{l}
\small University of Miami \\
\small 1365 Memorial Dr                    \\
\small Miami, FL 33134       \\
\small e-mail: \texttt{bdeolive@math.miami.edu}   \\ 
\small \phantom{e-mail: }\texttt{weiss@math.miami.edu}
\end{tabular}
\end{center}

\noindent
\textbf{Abstract} We give a new criterion for when a resolution of a surface of general type with canonical singularities has big cotangent bundle and a new lower bound for the values of $d$ for which there is a surface with big cotangent bundle that is deformation equivalent to a smooth hypersurface in $\Bbb P^3$ of degree $d$. This preprint is the base of the article to appear in the Boletim da SPM volume 77, December 2019 (special collection of the work of Portuguese mathematicians working abroad).
\medskip

\noindent\textbf{keywords:} 
big cotangent bundle; surfaces of general type; canonical singularities.

%%%%%%%%%%%%%%%%%%%%
%%%%%%%%%%%%%%%%%%%%
% Primary Section %%
%%%%%%%%%%%%%%%%%%%%

\section{Introduction and general theory}
\label{sec:a}

Symmetric differentials, i.e. sections of the symmetric powers of the cotangent bundle $S^m\Omega^1_X$, of a projective manifold $X$ play a role in obtaining hyperbolicity properties of $X$. Symmetric differentials give constraints on the existence of rational, elliptic and even entire curves in $X$ (nonconstant holomorphic maps from $\Bbb C$ to $X$), see for example \cite{demailly2015recent} and \cite{debarre2004hyperbolicity}.

\ 

The cotangent bundle  of a projective manifold is said to be big if the order of growth of $h^0(X, S^m \Omega_X^1)$ with $m$ is maximal (i.e., $=2\dim X-1$). The work of Bogomolov \cite{bogomolov_finiteness} and McQuillan \cite{mcquillan} gives that if a surface of general type has big $\Omega^1_X$, then $X$ satisfies the Green-Griffiths-Lang conjecture, i.e., there exists a proper subvariety $Z$ of $X$ such that any entire curve is contained in $Z$.

\

Smooth hypersurfaces $X \subset \mathbb{P}^3$ with degree $d \geq 5$ have $\Omega_X^1$ with strong positivity properties, such as $K_X$ being ample, but they have trivial cotangent algebra \cite{Brjukman_1971}, 

$$S(X) : = \bigoplus_{m = 0}^\infty H^0(X, S^m\Omega_X^1) = H^0(X, S^0 \Omega_X^1) = \mathbb{C}$$ 

\noindent see also \cite{bogomolov2008symmetric}. The absence of symmetric differentials on smooth hypersurfaces of $\mathbb{P}^3$ a priori prevents them from playing a role in obtaining hyperbolicity properties on smooth hypersurfaces of $\PP^3$.

\ 

Previous work of the 1st author and Bogomolov \cite{bogomolov_nodes} showed that there are smooth surfaces $X$ with big $\Omega^1_X$ that are deformation equivalent to smooth hypersurfaces  in $\mathbb{P}^3$. Hence symmetric differentials can still play a role in obtaining hyperbolicity properties for hypersurfaces of $\PP^3$. In \cite{bogomolov_nodes} it was shown that there are nodal hypersurfaces $X\subset \mathbb{P}^3$ whose resolutions $\tilde X$ have big cotangent bundle. The simultaneous resolution result of Brieskorn \cite{brieskorn1970singular} implies that minimal resolutions $\tilde X$ of hypersurfaces $X \subset \mathbb{P}^3$ with only rational double points, i.e. canonical singularities, are deformation equivalent to smooth hypersurfaces of the same degree.

\ 
 
The results in this presentation are:

\begin{theorem}
Let $X$ be a surface of general type with canonical singularities. Then the minimal resolution $\tilde{X}$ of $X$ has big cotangent bundle if 

$$\sum_{x \in \text{Sing}X} h^1(x) \geq - \frac{s_2(\tilde{X})}{3!}$$

\end{theorem}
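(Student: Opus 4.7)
The strategy is to estimate $h^0(\tilde X, S^m\Omx)$ from below and show it grows like $m^3$, thereby establishing bigness of $\Omx$. The starting point is the identity
$$h^0(\tilde X, S^m\Omx) = \chi(\tilde X, S^m\Omx) + \hone - h^2(\tilde X, S^m\Omx),$$
so the plan is to bound each of the three terms asymptotically in $m$ and combine.

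For the Euler characteristic, Hirzebruch--Riemann--Roch on the smooth surface $\tilde X$ gives the standard expansion
$$\chi(\tilde X, S^m\Omx) = \frac{m^3}{3!}\, s_2(\tilde X) + O(m^2),$$
where $s_2(\tilde X) = K_{\tilde X}^2 - c_2(\tilde X)$ is the relevant Segre number of $\Omx$. For the top cohomology, Serre duality together with the general type hypothesis (made effective by the fact that $K_{\tilde X} = \pi^* K_X$ is big and nef, since $X$ has only canonical singularities) yields
$$h^2(\tilde X, S^m\Omx) = h^0(\tilde X, S^m T_{\tilde X} \otimes K_{\tilde X}) = O(m^2),$$
by a Bogomolov-style bound on sections of $S^m T_{\tilde X}\otimes K_{\tilde X}$ on a surface of general type.

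The heart of the matter is to produce enough first cohomology to offset a possibly negative $s_2(\tilde X)$. For each singular point $x\in\operatorname{Sing} X$ I would localise to a small neighbourhood $\tUx$ of the exceptional fibre $\pi^{-1}(x)$ and define the local invariant $h^1(x)$ as the leading $m^3$-coefficient (normalised by $1/3!$) of $h^1(\tUx, S^m\Omx|_{\tUx})$. A Mayer--Vietoris argument on the cover $\tilde X = (\tilde X\setminus \pi^{-1}\operatorname{Sing} X)\cup\bigcup_x \tUx$ is then used to inject these local classes into the global $H^1$, modulo an error of order $m^2$, giving
$$\hone \;\geq\; \Big(\sum_{x\in\operatorname{Sing} X} h^1(x)\Big)\, m^3 - O(m^2).$$

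Combining the three estimates yields
$$h^0(\tilde X, S^m\Omx)\;\geq\;\Big(\frac{s_2(\tilde X)}{3!} + \sum_{x\in\operatorname{Sing} X} h^1(x)\Big)\,m^3 + O(m^2),$$
and the stated hypothesis makes the leading coefficient non-negative; with the usual care for lower-order terms this forces $h^0(\tilde X, S^m\Omx)\gtrsim m^3$ and hence bigness of $\Omx$. The main obstacle will be the local step: making precise the $m^3$-asymptotic of $h^1(\tUx, S^m\Omx|_{\tUx})$ at each canonical singularity type ($A_n$, $D_n$, $E_6$, $E_7$, $E_8$), and verifying that the local classes genuinely inject into global $H^1$ and are not killed by the connecting homomorphism of the Mayer--Vietoris sequence. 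This will presumably require passing to formal neighbourhoods of the exceptional fibre and exploiting the explicit resolution graphs of the ADE singularities, extending the nodal ($A_1$) analysis of \cite{bogomolov_nodes} to the full canonical range.
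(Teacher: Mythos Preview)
Your overall architecture matches the paper's: reduce bigness of $\Omx$ to the inequality $\displaystyle\lim_{m\to\infty}\hone/m^3 > -s_2(\tilde X)/3!$ via Riemann--Roch and Bogomolov vanishing, then bound $\hone$ from below by the sum of the local contributions $h^1(\tUx, S^m\Omx)$. Two corrections on the easy terms: Bogomolov's theorem gives $h^2(\tilde X, S^m\Omx)=0$ for $m>2$, not merely $O(m^2)$; and computing the actual value of $h^1(x)$ for each ADE type is \emph{not} part of this theorem --- the statement is a criterion \emph{in terms of} $h^1(x)$, so your last paragraph is misplaced (it belongs to the application, Theorem~2).

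The genuine gap is the step you flag yourself: passing from local $H^1$ to global $H^1$. A Mayer--Vietoris sequence does not provide a map from $\bigoplus_x H^1(\tUx,\,\cdot\,)$ into $H^1(\tilde X,\,\cdot\,)$; what you need is that the restriction map $H^1(\tilde X, S^m\Omx)\to\bigoplus_x H^1(\tUx, S^m\Omx)$ is \emph{surjective}, and your ``modulo $O(m^2)$'' assertion is unsupported. The paper handles this cleanly with the Leray spectral sequence for $\sigma:\tilde X\to X$: the five-term sequence together with $H^2(\tilde X,S^m\Omx)=0$ gives
\[
0\to H^1(X,\sigma_*S^m\Omx)\to H^1(\tilde X,S^m\Omx)\to H^0(X,R^1\sigma_*S^m\Omx)\to H^2(X,\sigma_*S^m\Omx)\to 0,
\]
and $H^0(X,R^1\sigma_*S^m\Omx)=\bigoplus_x H^1(\tUx,S^m\Omx)$ since $R^1\sigma_*$ is supported on $\text{Sing}(X)$. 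The whole argument then hinges on proving $H^2(X,\sigma_*S^m\Omx)=0$, which the paper obtains by comparing $\sigma_*S^m\Omx$ with its reflexive hull $\hat S^m\Omega^1_X$ (the quotient has zero-dimensional support, so their $H^2$'s agree) and then invoking the \emph{orbifold} Bogomolov vanishing $H^2(X,\hat S^m\Omega^1_X)=0$, available because canonical surface singularities are quotient singularities and $X$ carries an orbifold K\"ahler--Einstein metric. This orbifold vanishing is the missing idea in your outline; without it the obstruction term cannot be controlled and the lower bound on $\hone$ does not follow.
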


See (\ref{sec:2.1}) for the definition of $h^1(x)$, it is an invariant of the singularity. Note that the left side encodes only information about the germs of the singularities of $X$, so it is local in nature. This result is stronger than the result in \cite{roulleau2014} stating that $\Omega^1_{\tilde{X}}$ is big if $s_2(\tilde{X}) + s_2(\mathcal{X}) > 0$, $s_2(\tilde{X})$ and $s_2(\mathcal{X})$ respectively the 2nd Segre number of $\tilde{X}$ and of the orbifold $\mathcal{X}$ associated to $X$, see section \ref{sec:2} for more details.

\

In section \ref{sec: 2.2} we give a method to find $h^1(x)$ where $(X, x)$ is the germ of an $A_2$-singularity. In a later work \cite{future-paper} we show how to extend this method to calculate $h^1(x)$ for other $A_n$ singularities. Then using theorem 1 and information on the possible number of canonical singularities of prescribed types allowed in a hypersurface $X \subset \mathbb{P}^3$ of degree $d$, we obtain

\begin{theorem}
For $d=9$ and $d\ge 11$, there are minimal resolutions of hypersurfaces $X\subset \mathbb{P}^3$ with canonical singularities and degree $d$ which have big cotangent bundle.
\end{theorem}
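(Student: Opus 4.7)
The plan is to apply Theorem 1 to minimal resolutions $\tilde{X}$ of hypersurfaces $X\subset \mathbb{P}^3$ of degree $d$ whose only singularities are of type $A_2$. Two ingredients feed in: first, the local invariant $h^1(x)$ at an $A_2$ germ, supplied by the explicit computation in Section \ref{sec: 2.2}; and second, the Segre number $s_2(\tilde{X})$, which for a hypersurface with only $A_2$ singularities is an affine function of $d$ and of the number $n$ of singularities. Reducing Theorem 1's inequality to the relation between $n$ and $d$ will give a numerical threshold $n_0(d)$, and the theorem will follow from comparing $n_0(d)$ to known existence results for $A_2$-singular surfaces in $\mathbb{P}^3$.

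First, I would extract from Section \ref{sec: 2.2} an explicit positive value $\alpha$ for $h^1(x)$ at an $A_2$ germ. Second, I would compute $s_2(\tilde{X})$ by splitting it into its smooth-model part and the singularity corrections. Since $A_2$ is a canonical (crepant) singularity, $K_{\tilde{X}}^2 = K_X^2 = d(d-4)^2$, while $c_2(\tilde{X})$ differs from $c_2$ of a smooth degree $d$ hypersurface by the topological contribution of each exceptional chain of two $(-2)$-curves, giving an extra $+2$ per singularity. Plugging the resulting expression for $s_2(\tilde{X})$ into Theorem 1 turns its hypothesis into
$$n\,\alpha \;\geq\; -\frac{s_2(\tilde{X})}{6},$$
which rearranges to an inequality of the form $n \geq n_0(d)$, with $n_0(d)$ quadratic in $d$.

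Third, I would confront $n_0(d)$ with the best available lower bound for the maximum number $\mu_{A_2}(d)$ of $A_2$ singularities realisable on a degree $d$ hypersurface in $\mathbb{P}^3$. Explicit constructions in the singular-surface literature (Chmutov-type surfaces, symmetric constructions, and their variants) produce families of hypersurfaces of degree $d$ carrying $\Theta(d^3)$ cusps. Checking the threshold $n_0(d) \le \mu_{A_2}(d)$ case by case for small $d$, together with the asymptotic comparison for $d$ large, singles out the degrees for which a suitable hypersurface exists; the goal is to verify this exactly for $d = 9$ and $d \geq 11$.

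The main obstacle is twofold. The first, algebraic, obstacle is obtaining a sharp positive value $\alpha$ for $h^1(x)$ at an $A_2$ singularity — this is the content of Section \ref{sec: 2.2} and rests on a delicate analysis of $h^1(\tilde{U}_x, S^m\Omega^1_{\tilde{X}})$ on the local minimal resolution. The second, geometric/combinatorial, obstacle is borderline degree matching: one must explain why $d=10$ fails while $d=9$ and $d=11$ survive, which requires knowing the precise best-possible count of $A_2$ singularities on hypersurfaces of those specific small degrees and checking $n_0(d) \le \mu_{A_2}(d)$ for each case individually rather than by a uniform asymptotic bound.
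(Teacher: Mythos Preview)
Your overall strategy matches the paper's exactly: apply Theorem~1 to minimal resolutions of degree-$d$ hypersurfaces with only $A_2$ singularities, feed in the value $h^1(x)=\frac{67}{216}$ from Section~\ref{sec: 2.2}, reduce to a numerical threshold on the number of $A_2$ points, and compare with known constructions (the paper uses Labs' explicit counts, formula~\eqref{eq:2.15}).

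There is, however, a genuine error in your computation of $s_2(\tilde{X})$. You assert that $c_2(\tilde{X})$ exceeds $c_2$ of a smooth degree-$d$ hypersurface by $+2$ per $A_2$ point. In fact they are equal: the paper invokes Brieskorn's simultaneous resolution together with Ehresmann's theorem to conclude that $\tilde{X}$ is \emph{diffeomorphic} to a smooth hypersurface of degree $d$, so $s_2(\tilde{X}) = -4d^2 + 10d$, independent of the number $n$ of singularities. (If you compute directly instead, the Milnor-number correction $e(X) = e(X_{\mathrm{sm}}) - 2n$ for the singular hypersurface exactly cancels your resolution correction $e(\tilde{X}) = e(X) + 2n$.) This is not cosmetic: with your formula $s_2(\tilde{X}) = -4d^2 + 10d - 2n$, Theorem~1 would read $\frac{67}{216}\,n \ge \frac{4d^2 - 10d + 2n}{6}$, i.e.\ $\bigl(\frac{67}{216} - \frac{1}{3}\bigr)n \ge \frac{4d^2-10d}{6}$, whose left side is negative for every $n$---no hypersurface would ever satisfy it. With the correct $s_2(\tilde{X})$ the paper gets the threshold $n > \frac{72}{67}(2d^2 - 5d)$; Labs' construction then meets this for $d=9$ (barely: $126$ versus $\approx 125.7$) and for $d\ge 11$, but misses at $d=10$.
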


The condition $s_2(\tilde{X}) + s_2(\mathcal{X}) > 0$ of \cite{roulleau2014} gives only $d\ge 13$ and there nodes are the best singularities. The above theorem uses $A_2$ singularities which due to theorem 1 are unexpectedly better than nodes, see \ref{eq:2.2} for more details.
 
\subsection{Big Cotangent Bundle}
\label{sec:1.1}

The cotangent bundle $\Omega_X^1$ on a complex manifold of dimension $n$ is said to be big if $$\lim_{m \rightarrow \infty} \dfrac{h^0(X, S^m \Omega_X^1)}{m^{2n - 1}} \neq 0$$ (i.e., $h^0(X, S^m \Omega_X^1)$ has the maximal growth order possible with respect to $m$ for $\dim X= n$). The property of $\Omega_X^1$ being big is birational. 

\

In the case of surfaces of general type there is a topologically sufficient condition  for bigness of $\Omega^1_X$, $s_2(X)>0$,  where $s_2(X) = c_1^2(X) - c_2(X)$ is the 2nd Segre number of $X$. This follows from the asymptotic Riemann-Roch theorem for symmetric powers of $\Omega^1_X$:

\begin{equation}
h^0(X, S^m\Omega_X^1) - h^1(X, S^m\Omega_X^1) + h^2(X, S^m\Omega_X^1) = \frac{s_2(X)}{3!} m^3 + O(m^2)\tag{1.1}\label{eq:1.1}
\end{equation}

\noindent and Bogomolov's vanishing for surfaces of general type,  $h^2(X, S^m\Omega_X^1) = 0$ for $m > 2$ \cite{bogomolov_1979}. 

\

Very few examples of minimal surfaces with $s_2(X) \leq 0$ are known to have $\Omega_X^1$ big, they appear in \cite{bogomolov_nodes} and \cite{roulleau2014}. In these examples, bigness of $\Omega_X^1$ follows from complex analytic and not topological properties of $X$. The complex analytic conditions are the presence of enough configurations of $(-2)$-curves associated with canonical singularities. In fact, these surfaces with big $\Omega^1_X$ are diffeomorphic to surfaces with trivial cotangent algebra, $S(X)\simeq\Bbb C$.

\

If $X$ is a smooth surface of general type, it follows from \ref{eq:1.1} and  $h^2(X, S^m \Omega_X^1) = 0$   that $\Omega_X^1$ is big if and only if:

\begin{align}
\lim_{m \rightarrow \infty} \frac{h^1(X, S^m\Omega_X^1)}{m^3} >- \frac{s_2(X)}{3!} \tag{1.2}\label{eq:1.2}
\end{align}

\subsection{Quotient singularities and local asymptotic Riemann-Roch for orbifold $\hat{S}^m\Omega_X^1$}
\label{sec:1.2}

In this section we present the local asymptotic Riemann-Roch for the orbifold symmetric powers of the cotangent bundle of a normal surface with only quotient singularities. For references on this topic, see \cite{wahl_chernclasses}, \cite{blache}, \cite{kawamata}, \cite{miyaoka_orbibundle}.

\ 

The germ of a normal surface singularity $(X,x)$ is a quotient singularity germ if it is biholomorphic to $(\CC^2, 0)/G_x$, with $G_x \subset GL_2(\CC)$ finite and small, where $G_x$ is the local fundamental group. Canonical surface singularities are quotient singularities with $G_x \subset SL_2(\CC)$. Consider

\begin{equation*}
\begin{tikzcd}
 & (\CC^2, 0) \arrow[dl, "\varphi"', dashed] \arrow[d, "\pi"] \\
(\tilde{X}, E) \arrow[r, "\sigma"]  & (X, x)  \\
\end{tikzcd}
\end{equation*}

\noindent with $\pi: (\CC^2, 0) \rightarrow (X, x)$, the quotient map by the local fundamental group, called the local smoothing of $(X, x)$ and $\sigma:(\tilde{X}, E) \rightarrow (X,x)$ a good resolution of $(X,x)$ where $(\tilde{X}, E)$ is the germ of a neighborhood of the exceptional locus $E$ with $E$ consisting of smooth curves intersecting transversally. 

\

A reflexive coherent sheaf $\mathcal{F}$, i.e. $\mathcal{F}^{\vee \vee} = \mathcal{F}$, on $(X, x)$ is a locally free sheaf away from the singularity and satisfies $\mathcal{F} = i_*(\mathcal{F}|_{X\setminus \{x\}})$, $i: X\setminus \{x\} \hooklongrightarrow X$. Associated to a reflexive sheaf $\mathcal{F}$ on the quotient surface germ $(X,x)$ there are locally free sheaves $\tilde{\mathcal{F}}$ on $(\tilde{X}, E)$ (not uniquely determined) and $\hat{\mathcal{F}}$ on $(\CC^2, 0)$ (uniquely determined) satisfying $\mathcal{F}\cong(\sigma_*\tilde{\mathcal{F}})^{\vee \vee}\cong (\pi_*^{G_x}) \hat{\mathcal{F}}$, where $(\pi_*^{G_x})\hat{\mathcal{F}}$ is a maximal subsheaf of $\pi_* \hat{\mathcal{F}}$ on which $G_x$ acts trivially, (\cite{blache} section 2).

\

The previous paragraph implies that reflexive coherent sheaves on normal surfaces with only quotient singularities $X$ are orbifold vector bundles on $X$ (also called $\QQ$-vector bundles or locally $V$-free bundles over $X$). The orbifold $m$-symmetric power of the cotangent bundle on a normal surface $X$ with only quotient singularities is $\hat{S}^m \Omega_X^1 := (S^m\Omega_X^1)^{\vee \vee}$ with $\Omega_X^1 = i_*(\Omega_{X_{reg}}^1)$. If $\tilde{X} \xrightarrow{\sigma} X$ is a good resolution $\hat{S}^m\Omega_X^1 = (\sigma_* S^m \Omega_{\tilde{X}}^1)^{\vee \vee}$.

\

In the proof of theorem 1 a lower bound for $h^1(\tilde{X}, S^m\Omega_{\tilde{X}}^1)$ is given using only information on the singularities of $X$. Each $x_i$ contributes with $h^1(\tilde{U}_{x_i}, S^m \Omega_{\tilde{X}}^1)$ where $\tilde{U}_{x_i}$ is the minimal resolution of  an affine neighborhood $U_{x_i}$ of $x_i$ with $U_{x_i}\cap \text{Sing}(X)=\{x_i\}$. The bigness of $\Omega_{\tilde{X}}^1$ depends on the asymptotics of $h^1(\tilde{X}, S^m\Omega_{\tilde{X}}^1)$, see section (\ref{sec:1.2}), and hence on the combined asymptotics of the $h^1(\tilde{U}_{x_i}, S^m\Omega_{\tilde{X}}^1)$.

\

Let $(\tilde{X}, E) \xrightarrow{\sigma} (X, x)$ be a good resolution of the germ of a quotient surface singularity and $\tilde{\mathcal{F}}$, $\mathcal{F}$ be sheaves such that $\tilde{\mathcal{F}}$ is locally free of rank $r$ on $\tilde{X}$ and $\mathcal{F} = (\sigma_* \tilde{\mathcal{F}})^{\vee \vee}$ a reflexive sheaf on $X$. In comparing the Euler characteristics $\chi(X, \mathcal{F})$ and $\chi(\tilde{X}, \tilde{\mathcal{F}})$ one has $\chi(X, \mathcal{F}) = \chi(\tilde{X}, \tilde{\mathcal{F}}) + \chi (x, \tilde{\mathcal{F}})$ with 

\begin{equation}
    \chi (x, \tilde{\mathcal{F}}) = \dim (H^0(\tilde{X}\setminus E, \tilde{\mathcal{F}})/H^0(\tilde{X}, \tilde{\mathcal{F}})) + h^1(\tilde{X}, \tF) \tag{1.3}\label{eq:1.3}
\end{equation}
called the modified Euler characteristic of $\tilde{\mathcal{F}}$ (\cite{wahl_chernclasses}, \cite{blache} 3.9). The asymptotics of \ref{eq:1.3} are described via a local asymptotic Riemann-Roch theorem (\cite{blache} 4.1)
\begin{equation}
    \lim_{m\rightarrow \infty} \frac{\chi(x, S^k\tilde{\mathcal{F}})}{m^{2 + r - 1}} = - \frac{1}{(2 + r -1)!}s_2(x, \tilde{\mathcal{F}})\tag{1.4}\label{eq:1.4}
\end{equation}
with $s_2(x, \tF):=c_1^2(x, \tF) - c_2(x, \tF)$, the local 2nd Segre number of $\tF$ and $c_i(x, \tF) \in H_{dRc}^{2i}(\tilde{X}, \CC)$ the $i$-th local Chern class of $\tF$. The local Chern classes appear when comparing the pullback of orbifold Chern classes of an orbifold vector bundle $\mathcal{F}$ on an orbifold $X$ and the Chern classes of the vector bundle $\tF$ on $\tilde{X}$, a good resolution $\sigma: \tilde{X} \rightarrow X$ of $X$, satisfying $\mathcal{F} = (\sigma_* \tF)^{\vee \vee}$.

\

We are only concerned with good resolutions $\sigma:(\tilde{X}, E) \rightarrow (X, x)$ of canonical surface singularities and $\tF = \Omega_{\tilde{X}}^1$, one has $c_1^2(x, \Omega_{\tilde{X}}^1) = 0$ and:
\begin{equation}
    s_2(x, {\Omega}_{\tilde{X}}^1) = -c_2(x, \Omega_{\tilde{X}}^1) = -(e(E) - \frac{1}{|G_x|}) \tag{1.5} \label{eq:1.5}
\end{equation}
with $e(E)$ the topological Euler characteristic of the exceptional locus and $|G_x|$ the order of the local fundamental group (\cite{blache} 3.18). We will use the invariant of the singularity:

\begin{equation}
   s_2(x,X):=s_2(x, {\Omega}_{\tilde{X}_{min}}^1)\tag{1.6} \label{eq:1.6}
\end{equation}

\noindent where $\sigma:(\tilde{X}_{min}, E) \rightarrow (X, x)$ is the minimal good resolution.

%%%%%%%%%%%%%%%%%%%%
%%%%%%%%%%%%%%%%%%%%
% Segunda sec‹o

\section{Theorems}
\label{sec:2}
\subsection{Resolutions with big cotangent bundle}
\label{sec:2.1}

We consider minimal resolutions $\sigma:\tilde{X} \rightarrow X$ of normal surfaces $X$ with only canonical singularities. The minimality condition has several advantages: i) the local 2nd Segre numbers $s_2(x, \tilde{\Omega}_{\tilde{X}}^1)$ being considered are $s_2(x,X)$ which depend only on the singularity (since the resolution is fixed); ii) in section \ref{sec: 2.2} the simultaneous resolution results used involve minimal resolutions of canonical singularities. Also, blowing up $b: \hat{X} \rightarrow X$ a smooth surface $X$ at a point does not affect  inequality \eqref{eq:1.2} determining bigness of the cotangent bundle, since
$$\lim_{m \rightarrow \infty} \frac{h^1(\hat{X}, S^m\Omega_{\hat{X}}^1)}{m^3} + \frac{s_2(\hat{X})}{3!}=\lim_{m \rightarrow \infty} \frac{h^1(X, S^m\Omega_X^1)}{m^3} + \frac{s_2(X)}{3!} $$

Let $\sigma:\tilde{U}_x \rightarrow U_x$ be the minimal resolution of an affine normal surface $U_x$ with a single canonical singularity at the point $x \in U_x$. Set:

\begin{align*}
    h^1(x) &:= \lim_{m\rightarrow \infty} 
    \frac{h^1\left(\tilde{U}_x, S^m \Omega_{\tilde{X}}^1\right)}{m^3} \tag{2.1}\label{eq:2.1}\\
    &\\
     h^0(x) &:= \lim_{m \rightarrow \infty} 
    \frac{\left[H^0\left(\tilde{U}_x\setminus E, S^m\Omega_{\tilde{U}_x}^1\right)/ H^0\left(\tilde{U}_x, S^m \Omega_{\tilde{U}_x}^1\right)\right]}{m^3} \tag{2.2}\label{eq:2.2}
\end{align*}

The local asymptotic Riemann-Roch equation (\ref{eq:1.4}) for the local modified Euler characteristic (\ref{eq:1.3}) for $\tUx$ and $S^m\Omega_{\tUx}^1$ gives:

\begin{equation}
    h^1(x) = -\frac{1}{3!}s_2(x, X) - h^0(x). \tag{2.3}\label{eq:2.3}
\end{equation}
with $s_2(x, \Omega_{\tUx}^1)$ an invariant of the canonical singularity $(U_x, x)$, since $\tUx$ is its minimal resolution (and hence unique).  In \cite{future-paper} using local duality and local cohomology for the pair $(\tilde{X}, E)$, it is shown that $h^0(x)\le h^1(X)$ holds, hence:

\begin{equation}
  h^1(x) \geq -\frac{s_2(x, X)}{2\cdot3!}   \tag{2.4}\label{eq:2.4}
\end{equation}

\

\

\setcounter{theorem}{0}
\begin{theorem}
Let $X$ be a normal projective surface of general type with only canonical singularities and $\sigma: \tilde{X} \rightarrow X$ a minimal resolution. Then $\Omega_{\tilde{X}}^1$ is big if and only if:
\begin{equation}
    \sum_{x \in \text{Sing}X} h^1(x) \geq -\frac{s_2(\tilde{X})}{3!} \tag{2.5}\label{eq:2.5}
\end{equation}
\end{theorem}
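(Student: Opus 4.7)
The strategy is to apply the bigness criterion \eqref{eq:1.2} to the smooth projective surface $\tilde X$ and reduce the asymptotic behaviour of $h^1(\tilde X, S^m \Omega_{\tilde X}^1)$ to the local contributions at the singular points of $X$ via a Leray spectral sequence argument. Since $\tilde X$ is birational to $X$, it is smooth projective of general type, so Bogomolov vanishing yields $h^2(\tilde X, S^m\Omega_{\tilde X}^1)=0$ for $m > 2$, and \eqref{eq:1.2} reduces the bigness of $\Omega_{\tilde X}^1$ to the estimate $\lim_m h^1(\tilde X, S^m\Omega_{\tilde X}^1)/m^3 > -s_2(\tilde X)/3!$.

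To localize the global $H^1$, I would apply the Leray spectral sequence for $\sigma:\tilde X \to X$ with coefficients $\mathcal{G}_m := S^m \Omega_{\tilde X}^1$. Because $\sigma$ is an isomorphism over $X\setminus \text{Sing}X$, the sheaf $R^1\sigma_*\mathcal{G}_m$ is a skyscraper on $\text{Sing}X$ whose stalk at $x$ is identified, via the theorem on formal functions, with $H^1(\tUx, S^m\Omega_{\tilde X}^1)$. The resulting five-term exact sequence
\begin{equation*}
H^1(X, \sigma_*\mathcal{G}_m) \to H^1(\tilde X, \mathcal{G}_m) \to \bigoplus_{x \in \text{Sing}X} H^1(\tUx, S^m\Omega_{\tilde X}^1) \to H^2(X, \sigma_*\mathcal{G}_m)
\end{equation*}
then yields the key lower bound $h^1(\tilde X, \mathcal{G}_m) \geq \sum_{x\in \text{Sing}X} h^1(\tUx, S^m\Omega_{\tilde X}^1) - h^2(X, \sigma_*\mathcal{G}_m)$.

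To kill the error term, I would use the inclusion $\sigma_*\mathcal{G}_m \hookrightarrow \hat S^m\Omega_X^1$ with cokernel $\mathcal Q_m$ supported on $\text{Sing}X$ (whence $H^2(X, \mathcal Q_m)=0$), combined with an orbifold Bogomolov-type vanishing $h^2(X, \hat S^m\Omega_X^1)=0$ for $m \gg 0$. These together force $h^2(X, \sigma_*\mathcal{G}_m)=0$ for large $m$. Dividing by $m^3$ and passing to the limit using \eqref{eq:2.1}, one obtains $\lim_m h^1(\tilde X, S^m\Omega_{\tilde X}^1)/m^3 \geq \sum_{x\in\text{Sing}X} h^1(x)$. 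The hypothesis $\sum_x h^1(x) \geq -s_2(\tilde X)/3!$ then gives bigness via \eqref{eq:1.2} (the strict inequality in \eqref{eq:1.2} is recovered asymptotically from the nonnegative contribution of $h^1(X, \sigma_*\mathcal{G}_m)/m^3$, which in turn controls the gap between $\lim h^1(\tilde X,\mathcal{G}_m)/m^3$ and $\sum_x h^1(x)$).

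The main obstacle I expect is the orbifold Bogomolov-type vanishing $h^2(X, \hat S^m \Omega_X^1)=0$ for $m\gg 0$ on a normal projective surface of general type with only canonical singularities. This should follow by passing to V-manifold charts and invoking Bogomolov's argument in the orbifold setting (Miyaoka, Sakai), or alternatively by combining Bogomolov vanishing on $\tilde X$ with the local decomposition \eqref{eq:1.3} at each singular point. A subsidiary technical point is verifying the compatibility between the stalks of $R^1\sigma_*S^m\Omega_{\tilde X}^1$ and the cohomology $H^1(\tUx, S^m\Omega_{\tilde X}^1)$ entering \eqref{eq:2.1}, which requires the theorem on formal functions and a careful passage from formal to analytic neighbourhoods of the exceptional locus.
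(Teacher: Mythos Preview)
Your proposal is correct and follows essentially the same route as the paper: reduce bigness to criterion \eqref{eq:1.2}, run the Leray spectral sequence for $\sigma$ together with Bogomolov vanishing on $\tilde X$ to get the five-term sequence, identify $H^0(X, R^1\sigma_* S^m\Omega^1_{\tilde X})$ with $\bigoplus_x H^1(\tilde U_x, S^m\Omega^1_{\tilde X})$, and kill $H^2(X,\sigma_* S^m\Omega^1_{\tilde X})$ via the inclusion into $\hat S^m\Omega^1_X$ and an orbifold Bogomolov vanishing (for which the paper invokes orbifold K\"ahler--Einstein metrics, citing Kobayashi and Tian). The only cosmetic difference is that the paper obtains the stalk identification by applying Leray again on the affine opens $U_x$ rather than via the theorem on formal functions, and---like you---it does not separately argue the borderline case of equality in \eqref{eq:2.5} versus the strict inequality in \eqref{eq:1.2}.
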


\begin{proof}
 We saw in section \ref{sec:1.1} that $\Omx$ is big if and only if $\lim_{m\rightarrow \infty} \frac{\hone}{m^3} > -\frac{s_2(\tilde{X})}{3!}$.

From the Leray spectral sequence for $\sigma_*$ and Bogomolov's vanishing $H^2(\tilde{X}, S^m \Omx) = 0$ for $m>2$, we obtain for $m>2$:
\begin{equation}
\begin{tikzcd}[cramped,sep=small]
0 \rar &H^1(X, \sigma_*S^m\Omx) \rar & H^1(\tilde{X}, S^m \Omx) \rar & H^0(X, R^1 \sigma_* S^m \Omx) \\
 \rar& H^2(X, \sigma_* S^m \Omx) \rar & 0 \tag{2.6}\label{eq:2.6}
\end{tikzcd}
\end{equation}
% \begin{equation}
%     0 \rightarrow H^1(X, \sigma_*S^m\Omx) \rightarrow H^1(\tilde{X}, S^m \Omx) \rightarrow H^0(X, R^1 \sigma_* S^m \Omx) \rightarrow H^2(X, \sigma_* S^m \Omx) \rightarrow 0 \tag{2.4}\label{eq:2.4}
% \end{equation}

\

The 1st direct image sheaf $R^1\sigma_* S^m \Omx$ has support on the zero-dimensional singularity locus $\text{Sing}(X) = \{x_1, \dots, x_k\}$ of $X$. Each $x_i$ has an affine neighborhood $U_{x_i}$ such that $U_{x_i} \cap \text{Sing}(X) = \{x_i\}$. Using the Leray spectral sequence again for each $\tUx = \sigma^{-1}(U_x)$, $\sigma: \tUx \rightarrow U_{x_i}$ we obtain:

$$H^0\left(X, R^1 \sigma_* S^m \Omx\right) = \bigoplus_{i=1}^k H^1\left(\tUx, S^m\Omega_{\tUx}^1\right)$$

Hence using the notation of section \ref{sec:2.1}:
\begin{equation}
    \sum_{x \in \text{Sing}(X)} h^1(x)= \lim_{m\rightarrow \infty}\frac{h^0\left(X, R^1 \sigma_* S^m \Omx\right)}{m^3} \tag{2.7}\label{eq:2.7}
\end{equation}

\smallskip

\noindent $\bf {Claim}$: $H^2(X, \sigma_*S^m \Omx) = 0$

\begin{proof}
Recalling that $\hat{S}^m\Omx := (\sigma_* S^m \Omx)^{\vee \vee}$, consider the short exact sequence:
$$0 \rightarrow \sigma_* S^m \Omx \rightarrow \hat{S}^m \Omx \rightarrow Q_m \rightarrow 0.$$
Left injectivity holds since $\sigma_* S^m \Omx$ is torsion free. The support of $Q_m = \frac{(\sigma_* S^m \Omx)^{\vee \vee}}{\sigma_* S^m \Omx}$
is again $\text{Sing}(X)$, hence $H^2(X, \sigma_*S^m \Omx) \cong H^2(X, \hat{S}^m \Omx)$.

\

The surface $X$ is an orbifold surface of general type with canonical singularities and $\hat{S}^m\Omx$ is the orbifold $m$-th symmetric power of the cotangent bundle of $X$. Bogomolov's vanishing  $H^2(X, \hat{S}^m \Omx) = 0$ for $m>2$ also holds in this setting, due to the existence of orbifold K\"ahler-Einstein metrics \cite{kobayashi1985}, \cite{Tian1986ie}, see also \cite{roulleau2014}.
\end{proof}

The vanishing of $H^2\left(X, \sigma_* S^m \Omx\right) = 0$ for $m > 0$, \eqref{eq:2.6} and \eqref{eq:2.7} give:

\begin{equation}
    \lim_{m\rightarrow \infty} \frac{h^1(\tilde{X}, S^m \Omx)}{m^3} \geq \sum_{x\in \text{Sing}(X)} h^1(x) \tag{2.8}\label{eq:2.8}
\end{equation}
and the result follows from \eqref{eq:1.2}.

\end{proof}

\begin{remark}  theorem 1 is stronger than the main theorem in \cite{roulleau2014} which states that $\Omx$ is big if $s_2(\tilde{X}) + s_2(X) > 0$. We have that $s_2(\tilde{X}) = s_2(X) + \sum_{x \in \text{Sing}X} s_2(x,X)$, (\cite{blache} 3.14), hence the condition $s_2(\tilde{X}) + s_2(X) > 0$ can be reexpressed as:
\end{remark}

\begin{equation}
    -\sum_{x \in \text{Sing}X} \frac{s_2(x, X)}{2} > - s_2(\tilde{X}) \tag{2.9}\label{eq:2.9}
\end{equation}

It follows from \eqref{eq:2.4} that the condition \eqref{eq:2.5} in theorem 1 implies \eqref{eq:2.9}.
In fact it gives much stronger results. In the next section we will show that if $(X, x)$ is the germ of an $A_2$ singularity, then $h^1(x) = \frac{67}{216}$ while $-\frac{s_2(x, X)}{2\cdot 3!} = \frac{48}{216}$. This implies that our inequality \eqref{eq:2.5} guarantees $\Omx$ is big for surfaces of general type $X$ with only $\frac{48}{67} \cdot \ell$ $A_2$-singularities, where $\ell$ is the number needed to satisfy inequality \eqref{eq:2.9}.

%%%%%%%%%%%%%%%%%%%%
%%%%%%%%%%%%%%%%%%%%
% Terceira sec‹o

\subsection{Deformations of smooth hypersurfaces with big $\Omega_X^1$}
\label{sec: 2.2}
% Include deformation as a remark?
% \begin{corollary} If $X \subset \PP^3$ is a hypersurface with only $\ell$ A_2$-singularities as the only singularities 
In this section we study for which $d$ there are (smooth) surfaces with big cotangent bundle that are deformation equivalent to smooth hypersurfaces in $\PP^3$ of degree $d$. We do this by considering minimal resolutions $\tilde{X}$ of hypersurfaces $X\subset \PP^3$ of degree $d$ with only $A_2$ singularities. A simultaneous resolution result of Brieskorn \cite{brieskorn1970singular} gives that $\tilde{X}$ is deformation equivalent to a smooth hypersurface of $\PP^3$ of degree $d$. In \cite{future-paper} other canonical singularities are also considered.

\begin{prop}
Let $\sigma: (\tilde{X}, E) \rightarrow (X,x)$ be the minimal resolution of the germ of an $A_2$ surface singularity. Then:
\begin{equation}
    h^0(x):=\lim_{m\rightarrow \infty}\frac{\dim[H^0(\tilde{X}\setminus E_i, S^m\Omx)/H^0(X, S^m \Omx))]}{m^3} = \frac{29}{216} \tag{2.10}\label{eq:2.10}
\end{equation}
\label{prop:1}
\end{prop}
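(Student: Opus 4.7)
The plan is to compute $h^0(x)$ by writing down explicit bases for the numerator and denominator of \eqref{eq:2.2} using the toric realization of the $A_2$-germ, and then extracting the leading $m^3$ asymptotic.

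First, realize the $A_2$-germ as $(X, x) = (\mathbb{C}^2, 0)/G$ with $G = \mathbb{Z}/3$ acting via $(x, y) \mapsto (\zeta x, \zeta^{-1} y)$ for $\zeta$ a primitive cube root of unity. Since the quotient map $\pi$ is étale away from the origin and $\tilde{U}_x \setminus E \cong U_x \setminus \{x\}$, Hartogs yields $H^0(\tilde{U}_x \setminus E, S^m \Omega_{\tilde{X}}^1) \cong H^0(\mathbb{C}^2, S^m \Omega^1)^G$, with monomial basis $\{x^a y^b\, dx^i dy^{m-i} : a, b \ge 0,\ 0 \le i \le m,\ a - b + 2i \equiv m \pmod 3\}$. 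Next, realize $\tilde{X}$ via the toric resolution of the cone $\operatorname{cone}((1,0),(1,3))$: the two interior rays $(1,1)$ and $(1,2)$ produce the $(-2)$-curves $E_1, E_2$ meeting transversely, and three affine toric charts $U_1 \supset E_1 \setminus E_2$, $U_2 \supset E_1 \cap E_2$, $U_3 \supset E_2 \setminus E_1$ cover a neighborhood of $E$.

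In each chart I would expand every monomial basis element in the local toric coordinates: since $dx$ and $dy$ become sums of the two local differentials multiplied by local monomials, $dx^i dy^{m-i}$ expands into a polynomial combination of local symmetric $m$-differentials. Membership in $H^0(\tilde{U}_x, S^m \Omega_{\tilde{X}}^1)$ is then equivalent to requiring that in each chart every resulting term have nonnegative orders along both toric boundary divisors. These conditions translate to a finite system of affine-linear inequalities on $(a, b, i)$ parameterized linearly in $m$. The triples satisfying the $G$-invariance congruence but failing at least one inequality form the lattice points of a rational polytope $P$ scaled by $m$, and their asymptotic count gives $h^0(x) = \operatorname{vol}(P)$; a direct calculation, split by which chart and divisor the regularity fails in with inclusion-exclusion at $E_1 \cap E_2$, should yield $\operatorname{vol}(P) = 29/216$.

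The main obstacle is the bookkeeping in the middle chart $U_2$: because $x$ and $y$ are each monomials in two local coordinates of $U_2$, the expansion of $dx^i dy^{m-i}$ produces $O(m^2)$ monomials whose orders along $E_1$ and $E_2$ are coupled through the binomial indices, so the regularity condition there is tighter than a naive ``leading term'' analysis would suggest. The resulting $U_2$-region must then be carefully reconciled with the simpler half-space conditions coming from $U_1$ and $U_3$ before the $29/216$ volume drops out.
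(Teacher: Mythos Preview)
Your plan has a genuine gap, and it is not the bookkeeping issue you flag at the end. The subspace $H^0(\tilde{U}_x,S^m\Omega_{\tilde X}^1)\subset H^0(\mathbb{C}^2,S^m\Omega^1)^G$ is \emph{not} spanned by the invariant monomials $x^ay^b\,dx^idy^{m-i}$, so the quotient in \eqref{eq:2.2} cannot be computed by testing monomials one at a time and counting those that fail a regularity inequality. In a chart of the resolution the factor $dx^idy^{m-i}$ becomes an honest linear combination of the local $du_1^{m-l}du_2^{l}$ (with coefficients $c_{ql}$ coming from $(2x-y)^q(-x+2y)^{m-q}$), and regularity is a condition on that combination, not on each summand. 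A single $z$-monomial can fail to extend while a suitable linear combination of $z$-monomials extends because their singular parts cancel; conversely your criterion ``every resulting term has nonnegative order'' rejects monomials that do extend. So there is no polytope $P$ whose lattice points parametrize a basis of the quotient, and the proposed volume computation never gets off the ground.

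What the paper does instead is exactly the linear algebra that replaces your polytope count. One partitions the $\mathbb{Z}_3$-invariant monomials of type $(i,m)$ into blocks $V(k,i,m)$; on each block there is a $z$-monomial basis $B_h(k,i,m)_z$ and, via the chart $U^1\cong\mathbb{C}^2_{(u_1,u_2)}$ containing $E_1\cap E_2$, a second $u$-monomial basis $B_h(k,i,m)_u$ related to the first by the matrix $[c_{ql}]$. The extendable subspace on that block is the intersection $G(k,i,m)=\phi_1^*V_h(k,i,m)_u\cap V_h(k,i,m)_z$ inside an $(m{+}1)$-dimensional ambient space, and the key input is that the relevant minors of $[c_{ql}]$ have full rank, giving $\dim G(k,i,m)=\max\bigl(h_z(k,i,m)+h_u(k,i,m)-(m{+}1),\,0\bigr)$. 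The contribution of each block to the quotient is therefore $\min\bigl(m{+}1-h_u,\,h_z\bigr)$, and it is the sum of these over $i\le 2m$ and admissible $k$ that produces $29/216$. Your approach would have to be reorganized around this intersection-of-subspaces picture; the inclusion--exclusion over charts and divisors you outline does not see it.
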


\begin{proof}
For the full proof see \cite{future-paper}.

\

We give here an extended description of what is involved in the proof. We use the affine model of an $A_2$-singularity $X= \{xz-y^3=0\}\subset \CC^3$ with the minimal resolution $\tilde{X}$ obtained as the strict preimage of $X$ under $\sigma: \hat{\CC}^3 \rightarrow \CC^3$, the blow up of $\CC^3$ at $(0,0,0)$.

\begin{center}
\begin{tikzpicture}
\begin{scope}
\node (tC) at (-6.7, 1.5) {$\CC^2$};
\node (C) at (-12.2,0) {$\CC^2 $};
\node (U) at (-11.2,0) {$\cong U^1\subset$};
\node (tX) at (-10.3,0) {$\tilde{X}$};
\node (X) at (-6.7, 0) {$X = \{xz - y^3 = 0\} \subset \CC^3$};

\draw[->, to path={-| (\tikztotarget)}] (tX) -- node[above] {$\sigma$} (X);
\draw[->, to path={-| (\tikztotarget)}] (tC) -- node[right] {$\pi$,$(z_1^3,z_1z_2,z_2^3)$} (X);
\draw[dashed, ->,  to path={-| (\tikztotarget)}] (tC) to [out=160, in=35] node[above left] {$\phi_1$,$(\frac{z_1^2}{z_2}, \frac{z_2^2}{z_1})$} (C);
\draw[dashed, ->,  to path={-| (\tikztotarget)}] (tC) to [out=200, in=30] node[above left] {$\phi$} (tX);
\end{scope}
\end{tikzpicture}
\end{center}

% \begin{equation*}
% \begin{tikzcd}
%  & & \CC^2 \arrow[dl, "\varphi", dashed] \arrow[d, "\pi"] \ar[dashed, "\phi_1",out=150, in=55]{dll} \\
% \CC^2 \arrow[r, "\cong^\psi"] & U^1 \subset \tilde{X} \arrow[r, "\sigma"]  & X = \{xz - y^3 = 0\} \subset \CC^3  \\
% \end{tikzcd}
% \end{equation*}

\noindent where $\pi: \CC^2 \rightarrow X$ gives the smoothing as in section \ref{sec:1.2}. Let $U^1= \tilde{X} \cap p^{-1}(U_1)$ with $p: \hat{\CC^3} \rightarrow \PP^2$ the canonical projection and $U_1 = \{ y \neq 0\} \subset \PP^2$, $[x:y:z]$ as homogeneous coordinates of $\PP^2$. The exceptional locus of $\sigma$ is $E = E_1 + E_2$, $E_i$ $(-2)$-curves intersecting transversally. On $U^1$ put coordinates $(u_1,u_2)$ with $\phi_1^* u_1 = \frac{z_1^2}{z_2}$ and $\phi_1^*u_2 = \frac{z_2^2}{z_1}$ and $E\cap U^1 = \{u_1 u_2 = 0\}$.

\

The isomorphism $\phi^*:H^0(\tilde{X}\setminus E, S^m\Omx)\to H^0(\CC^2, S^m\Omega_{\CC^2}^1)^{\ZZ_3}$ will be used to move the setting for finding $h^0(x)$ from $\tilde{X}\setminus E$ to $\CC^2$. We need a good description of $G(m) := \phi^*(H^0(\tilde{X}, S^m \Omega_{\tilde{X}}^1))$. We use:

$$G(m) = \phi_1^*(H^0(\CC^2, S^m \Omega_{\CC^2}^1))\cap H^0(\CC^2, S^m \Omega_{\CC^2}^1)$$

\

We call $z_1^{i_1}z_2^{i_2}dz_1^{m_1}dz_2^{m_2}$ a z-monomial of full type (f-type) $(i_1,i_2,m_1,m_2)_z$ and type $(i,m)_z$ with $i=i_1+i_2$ the order and $m=m_1+m_2$ the degree of the monomial. A monomial is holomorphic if $i_1,i_2\ge 0$ and $\ZZ_3$-invariant if $i_1+2i_2+m_1+2m_2\equiv 0$ mod 3. 

\vspace {.1in}

For each triple $(k,i,m)$ with $k\equiv -(m+i)$ mod 3 there is a collection of z-monomials:
\vspace {-.01in}
$$B(k,i,m)_z=\{(k-m+l,i+m-k-l,m-l,l)_z\}_{l=0,...,m} \hspace {.3in}(2.11)$$  

These collections give 
a partition of the set of all $\ZZ_3$-invariant z-monomials of type $(i,m)$. Set $V(k,i,m)_z=$Span$(B(k,i,m)_z)$.

\smallskip 

Let $B_h(k,i,m)_z$ be the subcollection of holomorphic z-monomials of $B(k,i,m)_z$. Set $V_h(k,i,m)_z:=$ $\text{Span}(B_h(k,i,m)_z)$= $H^0(\CC^2, S^m \Omega_{\CC^2}^1)\cap V(k,i,m)$. Set  $h_z(k,i,m):=\dim V_h(k,i,m)_z$=$\# B_h(k,i,m)_z$, from (2.11) it follows that $h_z(k,i,m)=\min(m+1,k+1, i + 1, m-k+i+1)$. Note that  $h_z(k,i,m)=0$ unless $0\le k\le m+i$.

\smallskip

Set  $G(k,i,m)$:= $G(m)\cap V(k,i,m)=G(m)\cap V_h(k,i,m)$. All the above gives (we will see below that $I(m)=2m$):
 
 \vspace {-.08in}

$$
\dim[H^0(\tilde{X}\setminus E, S^m\Omx)/H^0(X, S^m \Omx))]=\dim [H^0(\CC^2, S^m\Omega_{\CC^2}^1)^{\ZZ_3}/G(m)]$$ $$\hspace {.9in}=\sum_{i=0}^{I(m)}\sum_{\substack{0\le k \le m+i \\ k \equiv -(m+i) \, \text{mod} 3}} h_z(k,i,m) -\dim G(k,i,m)\hspace {.4in}(2.12)$$

\

The reason to consider the collections $B(k,i,m)$ will now be examined. The rational map $\phi_1:(\CC^2,z_1,z_2) \dashrightarrow (\CC^2,u_1,u_2)$ pulls back holomorphic u-monomials of type $(i,m)$ to rational $\ZZ_3$-invariant z-monomials of type $(i,m)$:
\begin{equation}
 \phi_1^*(p,i-p,q,m-q)_u=\sum_{l=0}^m c_{ql}(3(p+q)-(i+2m)+l,-3(p+q)+2(i+m)-l,m-l,l)_z    \tag{2.13}\label{eq:2.13}
\end{equation}
\noindent with the $c_{ql}$ given by $(2x-y)^q(-x+2y)^{m-q}=\sum_l c_{ql}x^{m-l}y^l$.

\vspace {.08in}

From \eqref{eq:2.13} and (2.11) it follows that the pullback of a u-monomial of type $(i,m)$ lies in a single $V(k,i,m)$ and that the u-monomials whose pullback lie in  $V(k,i,m)$ themselves form the collection $B(k,i,m)_u:=\{(k'-m+l,i+m-k'-l,m-l,l)_u\}_{l=0,...,m}$ with $k'=\frac{i+m+k}{3}$. Let $B_h(k,i,m)_u$ be the subcollection of holomorphic u-monomials of $B(k,i,m)_u$ and set $V_h(k,i,m)_u=$Span$(B_h(k,i,m)_u)$. Set $h_u(k,i,m) :=\dim V_h(k,i,m)_u$, we have $h_u(k,i,m)= \min(m+1, \frac{k + (i+m)}{3}+1, i + 1, \frac{2(i+m)-k}{3}+1 )$.
% This should be $h_u(k,i,m)= \min(m+1, \frac{k + (i+m)}{3}+1, i + 1, \frac{2(i+m)-k}{3}+1 )$.
\

We proceed to find $I(m)$ and $\dim G(k,i,m)$ and calculate (2.12). We have that $G(k,i,m)=\phi_1^*(V_h(k,i,m)_u)\cap V_h(k,i,m)_z$. By using information on the rank of  relevant subblocks of matrix $[c_{ql}]$, with $c_{ql}$ as in (2.12) (see \cite{future-paper} for details), we obtain that:
\vspace {-.05in}
$$\dim G(k,i,m) = \max{(h_z(k,i,m) + h_u(k,i,m) - (m+1), 0})$$

From the formula for $h_u(k,i,m)$ above, it follows that $h_u(k,i,m)=m+1$ and hence $G(k,i,m)=h_z(k,i,m)$ for all $0\le k\le m+1$ if $i\ge 2m$. This implies that all the terms in (2.12) for $i\ge 2m$ vanish, hence by setting $I(m)=2m$ we can write the full sum and obtain:

\vspace {-.2in}

$$h^0(x)=\lim_{m\to \infty}\frac{1}{m^3}\sum_{i=0}^{2m}\sum_{\substack{0\le k \le m+i \\ k \equiv -(m+i) \, \text{mod} 3}} \min(m+1 - h_u(k,i,m), h_z(k,i,m)) = \frac{29}{216}$$
\end{proof}

\begin{remark}  For $A_1$ singularities using the set up described in \cite{bogomolov2008symmetric} by the 1st author the method to find $h^0(x)$ is substantially simpler and $h^0(x)=\frac{11}{108}$, see Jordan Thomas' thesis \cite{thomas}. For an approach along the lines of proposition 2.1 and valid for all $A_n$ singularities see  \cite{future-paper}. 
\end{remark}

\begin{theorem}
For $d = 9$ and $d \geq 11$ there are minimal resolutions of hypersurfaces in $\PP^3$ with canonical singularities and degree $d$ which have big cotangent bundle.
\end{theorem}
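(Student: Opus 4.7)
The plan is to apply Theorem 1 to a degree $d$ hypersurface $X \subset \PP^3$ whose only singularities are $A_2$ points and to convert the inequality \eqref{eq:2.5} into a lower bound on the number $N$ of such singularities. First I compute $h^1(x)$ for an $A_2$ germ. Because $|G_x|=3$ and the exceptional divisor of the minimal resolution consists of two $(-2)$-curves meeting transversally (so $e(E)=3$), formulas \eqref{eq:1.5}--\eqref{eq:1.6} give $s_2(x,X) = -(3-1/3) = -8/3$, whence $-s_2(x,X)/3! = 4/9 = 96/216$. Combining with Proposition~\ref{prop:1} and the identity \eqref{eq:2.3} yields $h^1(x) = 96/216 - 29/216 = 67/216$.

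Next I invoke Brieskorn's simultaneous resolution theorem: the minimal resolution $\tilde{X}$ of such an $X$ is deformation equivalent to a smooth degree $d$ hypersurface, so $s_2(\tilde{X})$ equals the topological Segre number of the smooth surface, namely $s_2(\tilde{X}) = 10d - 4d^2$. Substituting into \eqref{eq:2.5} converts Theorem 1 into the purely numerical condition
\begin{equation*}
N \cdot \frac{67}{216} \;\geq\; \frac{d(2d-5)}{3}, \qquad \text{equivalently} \qquad N \;\geq\; \left\lceil \frac{72\,d(2d-5)}{67}\right\rceil .
\end{equation*}

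What remains is to exhibit, for each $d \in \{9\} \cup \{d \geq 11\}$, a degree $d$ hypersurface in $\PP^3$ whose singular locus contains at least this many $A_2$ points. For this I would rely on known constructions of surfaces rich in cusps --- modifications of Chmutov-style folded polynomials, cyclic covers branched along carefully chosen curves, or Togliatti--Barth-type explicit families --- and on the fact that the maximal number of $A_2$ singularities on a degree $d$ surface grows like $O(d^3)$, while the requirement above is only $O(d^2)$. Consequently, once $d$ is moderately large, the inequality is satisfied with considerable room to spare, and the infinite tail $d \geq 13$ comes out almost for free from a single asymptotic construction.

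The main obstacle lies at the low end of the range. For $d = 9$ the threshold $N \geq 126$ must be attained by an explicit family of cusped degree $9$ surfaces, and the small cases $d = 11, 12$ similarly demand concrete families whose $A_2$ points can be counted exactly, because the cusp counts produced by the standard constructions at these degrees hover only slightly above the threshold. The exclusion of $d = 10$ reflects exactly this tightness: the best presently available degree $10$ cusped surfaces fall short of the corresponding threshold $N \geq 162$, so that degree cannot be captured by the method.
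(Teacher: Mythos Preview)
Your approach is exactly the paper's: compute $h^1(x)=67/216$ for an $A_2$ germ from \eqref{eq:2.3} and Proposition~\ref{prop:1}, invoke Brieskorn's simultaneous resolution to identify $s_2(\tilde X)$ with $10d-4d^2$, and convert Theorem~1 into the numerical bound $\ell>\tfrac{72}{67}(2d^2-5d)$ on the number of $A_2$ points.

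The one genuine gap is the final step. You gesture at ``Chmutov-style folded polynomials, cyclic covers, Togliatti--Barth-type families'' and note the cubic-versus-quadratic asymptotics, but you never produce or cite a specific construction with a countable number of cusps. The paper closes this by invoking Labs' Dessins d'Enfants construction \cite{labs}, which supplies an \emph{explicit} formula \eqref{eq:2.15} for the number $\ell$ of $A_2$ singularities realisable in degree $d$; plugging in gives $\ell=126$ at $d=9$ (threshold $\approx 125.7$), $\ell=159$ at $d=10$ (threshold $\approx 161.2$, hence the exclusion), and $\ell$ exceeding the threshold for every $d\ge 11$. Without this or an equally concrete source your argument does not actually establish the cases $d=9,11,12$: the asymptotic remark handles only the tail, and your own acknowledgement that the low-degree counts ``hover only slightly above the threshold'' shows that an explicit verified count, not a qualitative appeal to known families, is what is required there.
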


\begin{proof}
Let $X_{d, \ell}\subset \PP^3$ denote a hypersurface of degree $d$ with $\ell$ $A_2$-singularities as its only singularities and $\tilde{X}_{d, \ell}$ its minimal resolution. The Brieskorn simultaneous resolution theorem, \cite{brieskorn1970singular} and Ehresmann's fibration theorem give that $\tilde{X}_{d, \ell}$ is diffeomorphic to a smooth hypersurface of degree $d$ in $\PP^3$, hence $s_2(\tilde{X}_{d, \ell})=-4d^2 + 10d$.

\smallskip

From sections \ref{sec:1.2} and \ref{sec:2.1} we have that $h^1(x) = -\frac{1}{3!}s_2(x, X)- h^0(x)=\frac {1}{3!}(e(E) - \frac{1}{|\ZZ_3|}) - h^0(x)$, where $(\tilde{X}, E)$ is a minimal resolution of the germ of the $A_2$-singularity $(X,x)$ ($e(E)=3$). Using proposition \ref{prop:1}, it follows that:
\begin{equation}
    h^1(x) = \frac{67}{216} \tag{2.14}\label{eq:2.14}
\end{equation}
In Labs \cite{labs} it is shown how to construct hypersurfaces in $\PP^3$ with only $A_n$ singularities with $n$ fixed using Dessins d'Enfants. For $A_2$ singularities one has that there are hypersurfaces $X_{d,\ell}$ if:

\begin{equation}
    \ell = \begin{cases} 
    \frac{1}{2}d(d-1)\cdot\floor{\frac{d}{3}} + \frac{1}{3} d(d-3)(\floor{\frac{d-1}{2}}) - \floor{\frac{d}{3}}) & d \equiv 0 \mod 3 \\
    \frac{1}{2} d(d-1)\cdot\floor{\frac{d}{3}} + \frac{1}{3}(d(d-3) + 2)(\floor{\frac{d-1}{2}}) - \floor{\frac{d}{3}}) & \text{otherwise}\\ 
    \end{cases}
    \tag{2.15}\label{eq:2.15}
\end{equation}

Theorem 1 and \ref{eq:2.14} give that $\Omega_{\tilde{X}_{d, \ell}}^1$ is big if $\frac{67}{216}\ell > s_2(\tilde{X}_{d, \ell})$ or equivalently if:

\begin{equation}
    \ell>\frac {72}{67}(2d^2-5d)   \tag{2.16}\label{eq:2.16}
\end{equation}

By \ref{eq:2.15} there are hypersurfaces $X_{d, \ell}\subset \PP^3$ with $d$ and $\ell$ satisfying (2.16) if $d=9$ or $d \geq 11$.
\end{proof}

\

\begin{remark} 1) In Theorem 2 we can see the strength of theorem 1 when compared to the criterion for the cotangent bundle $\Omega_{\tilde{X}_{d, \ell}}^1$ to be big of \cite{roulleau2014},  $s_2(\tilde{X}_{d, \ell})+s_2({X}_{d, \ell})>0$. The criterion of \cite{roulleau2014}  needs $\ell>\frac {3}{2}(2d^2-5d)$ instead of (2.16). The known upper bounds by Miyaoka or Varchenko, (see \cite{varchenko1983semicontinuity}, \cite{miyaoka1984maximal}, and also \cite{labs}), for the number of $A_2$ singularities possible on a hypersurface in $\PP^3$ of degree $d$  prevent $\ell>\frac {3}{2}(2d^2-5d)$ for $d\le 11$. Moreover, one has to go to degree $d=14$ for the known constructions to give enough $A_2$ singularities for the criterion of \cite{roulleau2014}.

\vspace {.1in}

2) Following the method of theorem 2, if instead of using hypersurfaces in $\PP^3$ with only $A_2$ singularities, one used hypersurfaces with only $A_1$ singularities (nodes), then  one would need $\ell>\frac {9}{4}(2d^2-5d)$ nodes for the minimal resolution of an hypersurface with $\ell$ nodes to have big cotangent bundle. This would give surfaces with big cotangent bundle deformation equivalent to smooth hypersurfaces in $\PP^3$ of degree $d\ge 10$. The known upper bounds for the number of nodes possible in hypersurfaces of a given degree, see \cite{labs}, give that for degree 9 you can not have more than 246 nodes, our criterion needs 264. So $A_2$ singularities give a better result.
\end{remark}

\vspace {-.2in}
\bibliographystyle{amsalpha}
\bibliography{references.bib}

\end{document}